\documentclass[11pt]{amsart} 
\usepackage{amsmath, amssymb, amsthm, mathtools, caption}
%Revision returned to JCT A on April 19, 2026. Minor edits.
%\usepackage[pagebackref]{hyperref}
\usepackage{enumitem}
\usepackage{float}
\usepackage{listings}
\usepackage{xcolor}
\definecolor{codegray}{gray}{0.95}
\lstset{backgroundcolor=\color{codegray}, breaklines=true, columns=fullflexible, basicstyle=\ttfamily\small, frame=single, framerule=0pt, xleftmargin=1em, xrightmargin=1em, showstringspaces=false, keepspaces=true}

\usepackage[noabbrev,capitalize]{cleveref}
\usepackage{adjustbox}
\crefname{equation}{}{}
\usepackage{ytableau}
\usepackage{graphicx, tikz}
\usepackage[textheight=8.80in, textwidth=6.85in]{geometry}

\usepackage{microtype}
\usepackage{bbm}
% --- Local macros added to ensure compilation of background block ---

% -------------------------------------------------------------------
 % for \mathbbm{1}

%\linespread{1.1}

\newtheorem{theorem}{Theorem}
\newtheorem{lemma}[theorem]{Lemma}
\newtheorem{corollary}[theorem]{Corollary}

\newtheorem*{conjecture*}{Conjecture}

\theoremstyle{definition}

\theoremstyle{remark}
\newtheorem*{remark}{Remark}

\newtheorem*{example}{Example}

\numberwithin{equation}{section}

% ----- Delimiters -----

% (Optional) custom small centered bullet: unused by default
% \newcommand*{\cDot}{\mathpalette\cDot@{.5}}
% \newcommand*{\cDot@}[2]{\mathbin{\vcenter{\hbox{\scalebox{#2}{$\m@th#1\bullet$}}}}}

\usepackage{bm} % better bold math

% ----- Shorthand -----

% Symbols

 % typeset \beta_m(j)

% Operators

% Hats

% Letters

% Blackboard

% Fancy

\newcommand{\qbinom}[2]{\genfrac{[}{]}{0pt}{}{#1}{#2}}

% Fraktor

% Script

% Brackets
 % (x)
 % [x]
 % {x}
 % <x>

% Calculus

% Symbols

% Hats

% Macros

\title[Hermite-Jensen limits and $d$ log-concavity of $q$-multinomials]{Hermite-Jensen limits and $d$ log-concavity of $q$-multinomials}

\thanks{2020 {\it{Mathematics Subject Classification.}} 11B65, 05A17}
\keywords{$q$-multinomials, Tur\'an inequalities, log-concavity, Jensen polynomials}

\author{Ken Ono}

\address{Dept. of Mathematics, University of Virginia, Charlottesville, VA 22904, USA}
\email{ko5wk@virginia.edu}

\begin{document}
\maketitle

\begin{abstract}
In 1878, Sylvester proved Cayley's Conjecture that the coefficients of the Gaussian $q$-binomial coefficients are unimodal. In 1990, O'Hara famously discovered a constructive combinatorial proof, and in 2013, Pak and Panova proved the stronger property of strict unimodality for sufficiently large parameters. We move from unimodality to log-concavity and higher degree $ d$ log-concavity, known as Tur\'an inequalities. Although $q$-binomial coefficients are not always log- or degree $d$ log-concave, it's natural to ask to what extent these inequalities hold. In infinite families with limiting ``aspect ratio" bounded away from zero and one, we prove that these stronger inequalities hold uniformly, for each $C>0,$ on the central window
$|m-\mu|< C\sigma,$ where $\mu$  and $\sigma$ are the mean and standard deviation of the normalized distribution. More generally, we obtain the same conclusions for $q$-multinomial coefficients. These results stem from the asymptotic behavior of normalized Jensen polynomials, which are approximated by Hermite polynomials.
\end{abstract}

\section{Introduction and Statement of Results}

In this note, we revisit and refine the classical story about the unimodality of the Gaussian (or $q$-binomial) coefficients.
For $a\in\mathbb{C}$ and $n\in\mathbb{Z}_{\ge0}$, we have the $q$-Pochhammer symbol 
\[
(a;q)_n := \prod_{j=0}^{n-1} (1-a q^{\,j}),\qquad (a;q)_0:=1.
\]
For integers $a, b\geq 0,$ the {\it Gaussian (or $q$-binomial) coefficient} is the palindromic polynomial
\begin{equation}
\qbinom{a+b}{a}_q
:= \frac{(q;q)_{a+b}}{(q;q)_a\,(q;q)_b}
= \sum_{k=0}^{ab} c_{a,b}(k)\, q^k.
\end{equation}
The coefficients $c_{a,b}(k)$ count partitions that fit in an $a\times b$ rectangle.
For concreteness, we offer the following examples
\[
\begin{split}
\qbinom{4}{1}_q &= 1 + q + q^2 + q^3,\\[2pt]
\qbinom{4}{2}_q &= 1 + q + 2q^2 + q^3 + q^4,\\[2pt]
\qbinom{5}{2}_q &= 1 + q + 2q^2 + 2q^3 + 2q^4 + q^5 + q^6,\\[2pt]
\qbinom{6}{3}_q &= 1 + q + 2q^2 + 3q^3 + 3q^4 + 3q^5 + 3q^6 + 2q^7 + q^8 + q^9.
\end{split}
\]

In 1856, Cayley famously conjectured \cite{Cayley1856} the unimodality of the coefficients of $q$-binomial coefficients, where a finite sequence $(a_k)_{k=0}^N$ is \emph{unimodal} if there exists an index $m$ such that
\[
a_0 \le a_1 \le \cdots \le a_m \ge a_{m+1} \ge \cdots \ge a_N,
\]
with ``strictly unimodal'' meaning all the displayed inequalities are strict.
In 1878, Sylvester \cite{Sylvester1878} gave the first proof of this conjecture using invariant theory.  
More than a century later, O'Hara solved the famous open problem of giving a \emph{constructive combinatorial} proof of unimodality via an explicit bijection  \cite{OHara}.  To be more precise, she defined, for each $0\le k<ab/2$, an explicit injection
\[
\Phi_k:\ \bigl\{\lambda\subseteq a\times b:\ |\lambda|=k\bigr\}\ \hookrightarrow\ \bigl\{\lambda\subseteq a\times b:\ |\lambda|=k+1\bigr\},
\]
built from local row/column moves on Ferrers diagrams (and using conjugation symmetry for $k>ab/2$), thereby showing $c_{a,b}(k)\le c_{a,b}(k+1)$ up to the middle and hence unimodality of $\qbinom{m+n}{m}_q$.
Subsequently, strict unimodality for large rectangles was established by Pak and Panova. Namely, for  $m,n\ge 8$, the sequence $\{c_{m,n}(k)\}_{k=0}^{mn}$ increases \emph{strictly} up to the middle and then decreases strictly \cite{PakPanova}. 
They proved strict unimodality by recasting the successive differences $c_{m,n}(k+1)-c_{m,n}(k)$ in terms of Kronecker coefficients in the representation theory of symmetric groups, whose positivity is guaranteed by the semigroup property and stability.

Unimodality is often proved using the stronger notion of log-concavity.
A finite sequence $(a_k)_{k=0}^N$ of nonnegative reals is \emph{log-concave} if, for all $1\le k\leq N-1$, we have
\begin{equation}\label{eq:LC}
a_k^2 \;\ge\; a_{k-1}\,a_{k+1}.
\end{equation}
For non-negative sequences, (1.2) implies unimodality. Indeed, if we set
\[
r_k:=\frac{a_{k+1}}{a_k},
\]
then
\[
a_k^2\ge a_{k-1}a_{k+1}
\qquad\Longleftrightarrow\qquad
r_{k-1}\ge r_k,
\]
so $(r_k)$ is nonincreasing. Choose $m$ such that
\[
r_{m-1}\ge 1\ge r_m
\]
(take $m=0$ if all $r_k\le 1$, and $m=N$ if all $r_k\ge 1$). Then
\[
a_0\le \cdots \le a_m \ge \cdots \ge a_N,
\]
so the sequence is unimodal. If zeros occur, delete the initial and terminal zero blocks
(which preserves unimodality) and apply the positive case to the remaining segment.

It turns out that $q$-binomial polynomials are not globally log-concave. For example, we have
\[
\qbinom{4}{2}_q \;=\; 1 + q + 2q^2 + q^3 + q^4,
\]
so the row $(1,1,2,1,1)$ violates \eqref{eq:LC} at $k=1$ since $1^2<1\cdot 2$.
This raises a natural question: \emph{to what extent} does log-concavity hold?
For example, does log-concavity hold for ``windows'' of coefficients in $q$-binomial coefficients?
We prove that for ``balanced'' parameters this is the case.
Moreover, we show that this phenomenon holds for the more general notion of $d$ log-concavity, which fits into the hierarchy of
Tur\'an/Laguerre-type inequalities.
For a sequence $a=(a_k)$, we consider the operator
\begin{equation}
(\mathcal{L}a)_k \;:=\; a_k^2 - a_{k-1}a_{k+1}\qquad(1\le k\le N-1),
\end{equation}
and define iterates $\mathcal{L}^d$ recursively.
We say $a$ is \emph{$d$ log-concave on a set of indices $\mathcal{W}\subseteq\{0,\dots,N\}$} if
\begin{equation}
(\mathcal{L}^r a)_k \;\ge\; 0\quad\text{for all }k\in\mathcal{W}\text{ and all }1\le r\le d
\end{equation}
(where $a_{-1}=a_{N+1}=0$).
Log-concavity is the $d=1$ case of the higher Tur\'an inequalities. 

Here we establish the Tur\'an inequalities for ``windows'' of coefficients using the theory of Jensen and Hermite polynomials, following the strategy first described in \cite{GORZ} in connection with the Riemann Hypothesis.
To this end, suppose that $a$ and $b$ are positive integers, and let
\begin{equation}
\qbinom{a+b}{a}_q=\sum_{k=0}^{ab} c_{a,b}(k)\,q^k \qquad {\text {and}}\qquad
p_{a,b}(k):=\frac{c_{a,b}(k)}{\binom{a+b}{a}},
\end{equation}
and we let
\begin{equation}
\mu_{a,b}:=\frac{ab}{2} \qquad {\text {and}} \qquad
\sigma_{a,b}^2:=\frac{ab(a+b+1)}{12}.
\end{equation}
These parameters have a natural probabilistic interpretation. Indeed, let
$K$ be the random index with $\Pr[K=k]=p_{a,b}(k)$. Then we have
\[
\mu_{a,b}=\mathbb E[K]\qquad\text{and}\qquad \sigma_{a,b}^2=\operatorname{Var}(K),
\]
as proved in Lemma~\ref{lem:mu-sigma} below.
To see that the mean in $\mu_{a,b}=ab/2$, we note that the palindromicity
$c_{a,b}(k)=c_{a,b}(ab-k)$ (i.e. the distribution is symmetric about the midpoint $ab/2$).
The variance $\sigma_{a,b}^2=ab(a+b+1)/12$ is the second central moment; it is obtained by a
standard differentiation of $\log\qbinom{a+b}{a}_q$ at $q=1$.

For a sequence $(u_k)_{k\in\mathbb Z}$, integers $m\in\mathbb Z$ and $d\ge0$, the degree $d$
\emph{index $m$ Jensen polynomial} is
\begin{equation}
J^{d,m}(X;u):=\sum_{j=0}^{d} \binom{d}{j} u_{m+j}\,X^{j}.
\end{equation}
We apply this to the probability weights $u_k=p_{a,b}(k)$. 
Namely, we define the {\it normalized Jensen polynomials} by
\begin{equation}\label{JensenNormal}
\mathcal{J}^{d,m}_{a,b}(X)
:=\frac{\delta_{a,b}^{-d}}{p_{a,b}(m)}\,
J^{d,m}\!\big(\,\delta_{a,b} X - 1;\,p_{a,b}\big).
\end{equation}
where $\delta_{a,b}:=\frac{1}{\sqrt{2}\sigma_{a,b}}.$
We compare these to the physicists'-style Hermite polynomials  defined by the
generating function
\begin{equation}\label{eq:GORZ-Hermite}
\sum_{d=0}^{\infty} H_d(X)\,\frac{t^d}{d!} \;=\; e^{-t^2+Xt}=1+Xt+(X^2-2) \frac{t^2}{2!}+
(X^3-6X) \frac{t^3}{3!}+\dots
\end{equation}
We prove the following theorem.

\begin{theorem}\label{thm:HermiteLimit}
Fix $d\ge1$ and $\lambda\in(0,1)$, and suppose $a,b\to +\infty$ with $a/(a+b)\to\lambda$. Then for every $C>0$, uniformly for integers $m$ with $|m-\mu_{a,b}|\le C\,\sigma_{a,b}$, coefficientwise we have
\[
\mathcal J^{d,m}_{a,b}(X)=H_d(X)+O_{d,\lambda,C}\big((a+b)^{-1/2}\big).
\]
\end{theorem}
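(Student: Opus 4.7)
The plan is to follow the Hermite-limit strategy of Griffin--Ono--Rolen--Zagier (GORZ), deriving the conclusion from a sharp local central limit theorem (LCLT) for $p_{a,b}$ together with a finite-difference cancellation that isolates the Hermite polynomial at the top order in $\delta_{a,b}=1/(\sqrt{2}\sigma_{a,b})$. The first step I would take is to prove the sharp LCLT with Edgeworth-type control: uniformly for $|m-\mu_{a,b}|\le C\sigma_{a,b}$,
\[
p_{a,b}(m) = \frac{1}{\sqrt{2\pi}\,\sigma_{a,b}}\exp\!\left(-\frac{(m-\mu_{a,b})^2}{2\sigma_{a,b}^2}\right)\bigl(1 + O_{\lambda,C}((a+b)^{-1})\bigr).
\]
This follows from saddle-point analysis of the Cauchy integral $p_{a,b}(m) = (2\pi i)^{-1}\oint F(y)\,y^{-m-1}\,dy$ with $F(y) := \qbinom{a+b}{a}_y/\binom{a+b}{a}$, using the explicit product formula $\qbinom{a+b}{a}_q = \prod_{k=1}^a(1-q^{b+k})/(1-q^k)$ to expand the cumulant generating function $\log F(e^z)$ term by term. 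The hypothesis $a/(a+b)\to\lambda\in(0,1)$ ensures that the even cumulants satisfy $\kappa_{2r}/\sigma_{a,b}^{2r} = O_\lambda((a+b)^{1-r})$ (odd cumulants vanish by palindromic symmetry), which supplies the needed Edgeworth remainder.

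From the LCLT I would deduce, for each $0 \le j \le d$,
\[
\frac{p_{a,b}(m+j)}{p_{a,b}(m)} = \exp\!\left(-\frac{s\,j}{\sigma_{a,b}} - \frac{j^2}{2\sigma_{a,b}^2}\right)\bigl(1 + O_{d,\lambda,C}((a+b)^{-1})\bigr), \quad s := \tfrac{m-\mu_{a,b}}{\sigma_{a,b}}\in[-C,C].
\]
Rewriting the normalized Jensen polynomial with $(\delta X-1)^j = (-1)^j(1-\delta X)^j$ gives
\[
\mathcal{J}^{d,m}_{a,b}(X) = \delta_{a,b}^{-d}\sum_{j=0}^d\binom{d}{j}(-1)^j(1-\delta_{a,b}X)^j\,\frac{p_{a,b}(m+j)}{p_{a,b}(m)}.
\]
Expanding $(1-\delta X)^j$ and the exponential factor as formal power series in $\delta = \delta_{a,b}$, the summand takes the form $\sum_{k\ge 0}\delta^k P_k(j;X,s)$ where each $P_k$ is a polynomial of degree at most $k$ in $j$ (and polynomial in $X$ and $s$).

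The pivotal cancellation is the classical finite-difference identity $\sum_{j=0}^d\binom{d}{j}(-1)^j Q(j) = 0$ for every polynomial $Q$ of degree less than $d$, which annihilates the contributions at orders $\delta^0,\delta^1,\ldots,\delta^{d-1}$ upon summing over $j$. The surviving contribution at order $\delta^d$, once multiplied by $\delta^{-d}$, yields a polynomial in $X$ that matches $H_d(X)$ via direct comparison with the generating function $\sum_{d\ge0} H_d(X)\,t^d/d! = e^{-t^2 + Xt}$. The contributions from orders $\delta^{d+1}$ and higher, together with the LCLT remainder, combine into the claimed coefficientwise error $O_{d,\lambda,C}((a+b)^{-1/2})$.

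The main obstacle I expect is the error bookkeeping in the final step. The LCLT remainder of size $O((a+b)^{-1})$ is amplified by the normalization factor $\delta_{a,b}^{-d} \asymp (a+b)^{3d/2}$ in the definition of $\mathcal{J}^{d,m}_{a,b}$. This is compensated by the $d$-fold smoothing property of the alternating-sum operator $\sum_j\binom{d}{j}(-1)^j$, which gains a factor $\sigma_{a,b}^{-d}$ when applied to functions smooth on the natural scale $\sigma_{a,b}$; with $\sigma_{a,b}\asymp(a+b)^{3/2}$, the net contribution to each coefficient of $\mathcal{J}^{d,m}_{a,b}(X)$ is $O((a+b)^{-1/2})$. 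The combinatorial identification of the surviving $\delta^d$-term with $H_d(X)$ reduces to recognizing the multi-index sum $\sum_{i+\ell+2m=d}$ that appears in the expansion of $e^{-t^2+Xt}$.
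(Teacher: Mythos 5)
Your proposal follows essentially the same route as the paper: a local limit theorem giving a uniform quadratic (Gaussian) model for $\log\bigl(p_{a,b}(m+j)/p_{a,b}(m)\bigr)$ on the central window (the paper invokes Petrov's characteristic-function method where you use the equivalent saddle-point analysis of the Cauchy integral), followed by substitution into $\mathcal J^{d,m}_{a,b}$ and identification with $H_d$ via the binomial/finite-difference cancellation and the generating function $e^{-t^2+Xt}$, exactly as in the paper's log-ratio and Hermite-assembly lemmas. The one point to make fully precise is the error bookkeeping you flag yourself: for the alternating-sum smoothing to defeat the $\delta_{a,b}^{-d}$ amplification, the LCLT remainder must be organized as smooth Edgeworth polynomial corrections (annihilated up to order $\sigma_{a,b}^{-d}$ by $\sum_j\binom{d}{j}(-1)^jQ(j)=0$) plus a rough tail of size $O(\sigma_{a,b}^{-d}(a+b)^{-1/2})$, which requires taking the Edgeworth expansion to order at least $d$ rather than stopping at the first correction.
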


\begin{example}
If we let $(a,b)=(50,50)$, then we have
\[
\qbinom{100}{50}_q
= 1 \;+\; q \;+\; 2q^2 \;+\; 3q^3 \;+\; 5q^4 \;+\; 7q^5
\;+\; \cdots \;+\;
7q^{2495} \;+\; 5q^{2496} \;+\; 3q^{2497} \;+\; 2q^{2498} \;+\; q^{2499} \;+\; q^{2500}.
\]
Moreover, we have $\mu_{50,50}=1250$ and
\[
\sigma_{50,50}^2=\frac{50^2(50+50+1)}{12}
\approx 21041.666667 
\qquad {\text {and}}\qquad
\sigma_{50,50}\approx 145.057459.
\]
Therefore, we have
\[
\delta_{50,50}:=\frac{1}{\sqrt{2}\,\sigma_{50,50}} \approx 0.004874\dots.
\]
By writing $p(k):=p_{50,50}(k)=c_{50,50}(k)/\binom{50+50}{50}$, we have
\[
\mathcal{J}^{d,1250}_{50,50}(X)
=\frac{\delta_{50,50}^{-d}}{p(1250)}\sum_{j=0}^{d}\binom{d}{j}\,p(1250+j)\,(\delta_{50,50}X-1)^{j}.
\]
Evaluating for $d=1,2,3$ gives
\[
\begin{aligned}
\mathcal{J}^{1,1250}_{50,50}(X)
&= 0.999977\,X \;+\; 0.004787,\\[1mm]
\mathcal{J}^{2,1250}_{50,50}(X)
&= 0.999907\,X^{2} \;+\; 0.028721\,X \;-\; 1.963914,\\[1mm]
\mathcal{J}^{3,1250}_{50,50}(X)
&= 0.999790\,X^{3} \;+\; 0.071796\,X^{2} \;-\; 5.890518\,X \;-\; 0.083596.
\end{aligned}
\]
Theorem~\ref{thm:HermiteLimit} compares these polynomials with
$$H_1(X)=X,\ \ \  H_2(X)=X^2-2, \ \ \ {\text {and}}\ \ \ H_3(X)=X^3-6X.
$$
\end{example}
\medskip

Theorem~\ref{thm:HermiteLimit} implies the following corollary regarding the hyperbolicity (i.e. real-rootedness) of the modified Jensen polynomials and degree $d$ log-concavity.

\begin{corollary}\label{cor:hyperTuran}
If $d\ge 1$ and $\lambda\in(0,1)$, then for every $C>0$ there is a constant
$N=N(d,\lambda,C)$ such that the following hold for all pairs $(a,b)$ with $a,b\ge N$
and $a/(a+b)\in(\lambda-\tfrac{1}{N},\,\lambda+\tfrac{1}{N})$, where
\[
\mathcal W_{a,b}\;:=\;\big\{\,m\in\{0,\dots,ab\}:\ |m-\mu_{a,b}|\le C\,\sigma_{a,b}\,\big\}.
\]

\smallskip
\noindent
(1)  
For every $m\in\mathcal W_{a,b}$, the normalized Jensen polynomial
$\mathcal{J}^{d,m}_{a,b}(X)$ is hyperbolic
(all zeros real).
\smallskip

\noindent
(2) 
For every $1\le r\le d$ and every $k\in\mathcal{W}_{a,b}$, we have
\[
(\mathcal L^{\,r} c_{a,b})(k)\ \ge\ 0,
\]
(i.e. degree $d$ log-concavity holds in the window $\mathcal W_{a,b}$).
\end{corollary}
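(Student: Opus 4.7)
The plan is to deduce both parts from Theorem~\ref{thm:HermiteLimit}, using the hyperbolicity of each Hermite polynomial $H_d$ and the iterated log-concavity of the Gaussian density that the normalized coefficients $p_{a,b}$ approximate. The enlargement from a single aspect ratio $\lambda$ to the open interval $a/(a+b)\in(\lambda-1/N,\lambda+1/N)$ is handled by continuity: the implicit constants in Theorem~\ref{thm:HermiteLimit} depend continuously on $\lambda\in(0,1)$, so for $N$ large they are uniform on that small interval. Throughout I fix $\lambda$ and $d$ and take $N$ large.

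For part (1), I would use a standard sign-change argument. The polynomial $H_d(X)$ has $d$ distinct real zeros $x_1<\dots<x_d$; pick disjoint closed intervals $I_j\ni x_j$ on which $H_d$ changes sign. Coefficientwise convergence of polynomials of bounded degree upgrades to uniform convergence on the compact set $\bigcup I_j$, and by Theorem~\ref{thm:HermiteLimit} this convergence is uniform in $m\in\mathcal{W}_{a,b}$. For $N$ sufficiently large, $\mathcal{J}^{d,m}_{a,b}$ therefore inherits the sign change on each $I_j$, producing $d$ distinct real zeros and hence hyperbolicity (since the degree is $d$).

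For part (2), fix $r\in\{1,\dots,d\}$ and $k\in\mathcal{W}_{a,b}$. Because $c_{a,b}(k)=\binom{a+b}{a}p_{a,b}(k)$ and $\mathcal{L}^r$ is homogeneous of degree $2^r$, it suffices to prove $(\mathcal{L}^r p_{a,b})(k)\ge 0$. This quantity depends only on $p_{a,b}(k-r),\dots,p_{a,b}(k+r)$; these $2r+1$ values are recovered via an invertible triangular linear system from the coefficients of $\mathcal{J}^{2r,k-r}_{a,b}(X)$, whose base point $k-r$ still lies in a window of the desired shape after a slight enlargement of $C$. Theorem~\ref{thm:HermiteLimit} identifies those coefficients with those of $H_{2r}(X)$ up to a small error, and the Hermite coefficients in turn encode the Gaussian ratios $\exp(-\delta_{a,b}^{2}(j^2+2j(k-\mu_{a,b})))$. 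For this exact Gaussian sequence a direct induction on $r$ shows that $\mathcal{L}^r$ is strictly positive with value proportional to $p_{a,b}(k)^{2^r}\delta_{a,b}^{2(2^r-1)}$; substituting the Hermite-perturbed ratios and tracking the leading corrections gives strict positivity for $N$ large.

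The principal obstacle is the quantitative sign-preservation in part (2). The coefficientwise error $O((a+b)^{-1/2})$ in Theorem~\ref{thm:HermiteLimit} is coarser than the Gaussian lower bound $\delta_{a,b}^{2(2^r-1)}\asymp(a+b)^{-3(2^r-1)}$, so a naive perturbation estimate fails. Overcoming this requires refining the Hermite approximation to extract the next-order term in $\delta_{a,b}$, and coupling that refinement with the hyperbolicity from part (1): Newton-type inequalities applied to the real-rooted polynomial $\mathcal{J}^{2r,k-r}_{a,b}(X)$ pin the sign of each intermediate $\mathcal{L}^i p_{a,b}$, $1\le i\le r$, to that of its Gaussian counterpart, from which the $d$-log-concavity on $\mathcal{W}_{a,b}$ follows.
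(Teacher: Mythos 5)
Your part (1) is fine: the sign-change argument on disjoint intervals around the $d$ simple zeros of $H_d$ is an elementary variant of the paper's route, which instead packages the same fact as a Rouch\'e/Hurwitz lemma (Lemma~\ref{lem:hurwitz}); both rest on upgrading the uniform coefficientwise convergence of Theorem~\ref{thm:HermiteLimit} to uniform convergence on compacta.

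Part (2) has a genuine gap, and you have in fact diagnosed it yourself: the quantity $(\mathcal L^{r}p_{a,b})(k)$ for the limiting Gaussian profile is of size $p_{a,b}(k)^{2^{r}}\delta_{a,b}^{2(2^{r}-1)}$, which is far smaller than anything the $O((a+b)^{-1/2})$ coefficientwise error in Theorem~\ref{thm:HermiteLimit} can control, so no perturbation of the Gaussian model --- however many next-order terms you extract --- will yield the sign by direct comparison. The fix is not a refined asymptotic expansion at all; it is the purely qualitative implication from hyperbolicity to the Tur\'an inequalities. For $r=1$ this is just the discriminant of the quadratic Jensen polynomial $J^{2,m}(X)=u_m+2u_{m+1}X+u_{m+2}X^2$: real-rootedness gives $u_{m+1}^2-u_mu_{m+2}\ge0$ exactly, with no error term to beat. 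For general $r$ the paper invokes the Craven--Csordas equivalence (its Lemma~\ref{lem:jensen-turan}): if $J^{j,m}(X;u)$ is real-rooted for all $1\le j\le d+1$ and all relevant base points $m$, then $(\mathcal L^{r}u)_k\ge0$ for all $1\le r\le d$. So the correct completion of your argument is: apply your part (1) not just at degree $d$ but at each degree $j\le d+1$, with base points $m$ ranging over a window with $C$ slightly enlarged (as you already note), observe that hyperbolicity of $\mathcal J^{j,m}_{a,b}$ is equivalent to that of $J^{j,m}(X;p_{a,b})$ because the affine substitution $X\mapsto\delta_{a,b}X-1$ and positive rescaling preserve real-rootedness, and then cite the Craven--Csordas implication and homogeneity of $\mathcal L$ to pass from $p_{a,b}$ to $c_{a,b}$. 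Your closing appeal to ``Newton-type inequalities'' is the germ of this idea, but as written it is still entangled with the doomed quantitative comparison to the Gaussian counterpart and does not constitute a proof.
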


\medskip
The central-window phenomenon proved above is not special to $q$-binomials. 
For fixed $r\ge2$, we consider the $q$-multinomial setting.
The same analysis with only cosmetic changes applies. 
The next theorem is a generalization of Theorem~\ref{thm:HermiteLimit}.

\begin{theorem}\label{thm:multinom-hermite}
Fix $d\geq 1$ and $r\ge2$ and let $n=\sum_{i=1}^r n_i$ with proportions $n_i/n\to\lambda_i\in(\varepsilon,1-\varepsilon)$.
Define the $q$-multinomial by
\[
\qbinom{n}{n_1,\dots,n_r}_q
=\frac{(q)_n}{(q)_{n_1}\cdots(q)_{n_r}}
=\sum_{k=0}^{M} p(k)\,q^k,
\]
where  $M=\sum_{1\le i<j\le r} n_i n_j.$
Set
\[
\mu=\frac12\sum_{i<j} n_i n_j,\qquad
\sigma^2=\frac1{12}\sum_{i<j} n_i n_j\,(n_i+n_j+1),\qquad
\delta=\frac{1}{\sqrt{2}\,\sigma},
\]
and define the degree $d$ shift $m$ modified Jensen polynomial
\[
\mathcal{J}^{d,m}(X)
:=\frac{\delta^{-d}}{p(m)}\,
J^{d,m}\!\big(\,\delta X - 1;\,p\big).
\]
Then for every constant $C>0$, uniformly for integers 
\[
m\in \mathcal W:=\{ |m-\mu|\le C\,\sigma\,\},
\]
coefficientwise as $n\rightarrow +\infty$ we have
 \[
\mathcal{J}^{d,m}(X)
= H_d(X)\;+\;O_{d,r,\lambda,C}\!\bigl(n^{-1/2}\bigr).
\]
\end{theorem}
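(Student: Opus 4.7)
The strategy parallels the proof of Theorem~\ref{thm:HermiteLimit}, accommodating the multinomial setting through independence. First, iterating the nesting identity
\[
\qbinom{n}{n_1,\dots,n_r}_q \;=\; \prod_{i=1}^{r-1}\qbinom{n_1+\dots+n_{i+1}}{n_{i+1}}_q
\]
realizes the random variable $K$ with mass function $p(k)$ as an independent sum $K = K_1+\dots+K_{r-1}$, where $K_i$ has the $q$-binomial distribution of Theorem~\ref{thm:HermiteLimit} with parameters $(a_i,b_i):=(n_{i+1},\,n_1+\dots+n_i)$. Summing the individual means $a_ib_i/2$ and variances $a_ib_i(a_i+b_i+1)/12$ reproduces the asserted formulas for $\mu$ and $\sigma^2$, and the balance hypothesis $n_i/n\to\lambda_i\in(\varepsilon,1-\varepsilon)$ gives $\sigma = \Theta(n^{3/2})$ and comparable growth for each $\sigma_i$.

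Next, I would promote the single-$q$-binomial Gaussian approximation underlying Theorem~\ref{thm:HermiteLimit} to a uniform local limit theorem for $p$ by working with the product characteristic function $\widehat p(t)=\prod_i \widehat{p_i}(t)$. Two ingredients furnished by the single-factor case suffice: a quantitative near-origin expansion of each $\widehat{p_i}(t)$ about the Gaussian $\exp(\mathrm{i}\mu_i t - \sigma_i^2 t^2/2)$, and a uniform off-origin tail bound $|\widehat{p_i}(t)|<1-c_\lambda$ outside an arc of size $\Theta(\sigma_i^{-1})$ about $t=0$. Fourier inversion then yields
\[
p(m) = \frac{1}{\sigma\sqrt{2\pi}}\exp\!\left(-\frac{(m-\mu)^2}{2\sigma^2}\right)\bigl(1+O_{r,\lambda}(n^{-1/2})\bigr),\qquad m\in\mathcal W,
\]
and hence the finite-shift ratio expansion
\[
\frac{p(m+j)}{p(m)} = \exp\!\left(-\frac{j(m-\mu)}{\sigma^2} - \frac{j^2}{2\sigma^2}\right)\bigl(1+O_{d,r,\lambda,C}(n^{-1/2})\bigr),\qquad 0\le j\le d,
\]
uniformly for $m\in\mathcal W$.

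I would then substitute this into
\[
\mathcal{J}^{d,m}(X) = \delta^{-d}\sum_{j=0}^d\binom{d}{j}\frac{p(m+j)}{p(m)}\,(\delta X-1)^j,
\]
with $\delta = 1/(\sqrt{2}\,\sigma)$, and apply the Jensen-to-Hermite identification used in \cite{GORZ}. Writing $x:=(m-\mu)/\sigma$, so that the log-ratio reads $-\sqrt{2}\,xj\delta - j^2\delta^2$ to leading order, Taylor expansion in $\delta$ through order $d$ together with the formal identity $d!\,[t^d]\,e^{-t^2+Xt} = H_d(X)$ yield $\mathcal{J}^{d,m}(X) = H_d(X) + O(n^{-1/2})$ coefficientwise, the higher-order Edgeworth corrections being of lower order and absorbed into the error after the $\delta^{-d}$ amplification.

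The principal obstacle is the uniform local limit theorem itself: producing a quantitative Edgeworth expansion for the convolution $p_1*\dots*p_{r-1}$ that is uniform both over $m$ in the central window $\mathcal W$ and over proportions $(\lambda_i)$ staying in $(\varepsilon,1-\varepsilon)^r$. Since the requisite tail bound on each $|\widehat{p_i}|$ and the near-origin expansion are already supplied by the single-$q$-binomial case used in Theorem~\ref{thm:HermiteLimit}, the multinomial conclusion follows from independence by a routine product argument, which is exactly the ``cosmetic changes'' remark preceding the statement.
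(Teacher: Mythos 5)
Your telescoping factorization $\qbinom{n}{n_1,\dots,n_r}_q=\prod_{i=1}^{r-1}\qbinom{N_{i+1}}{n_{i+1}}_q$ (where $N_i:=n_1+\cdots+n_i$) is correct, and realizing $K$ as an independent sum $K_1+\cdots+K_{r-1}$ of $q$-binomial inversion statistics is a genuinely different route from the paper, which instead works from a factorization over pairs $i<j$. But your verification of the normalizing constants contains a concrete error that the rest of the argument cannot survive: the sum of the component variances does \emph{not} reproduce the asserted $\sigma^2$. With $(a_i,b_i)=(n_{i+1},N_i)$ one computes
\[
\sum_{i=1}^{r-1}\frac{a_i b_i (a_i+b_i+1)}{12}
=\frac{1}{12}\sum_{i<j}n_i n_j(n_i+n_j+1)\;+\;\frac{1}{6}\sum_{i<j<k}n_i n_j n_k,
\]
so for $r\ge 3$ the true variance of $K$ exceeds the stated $\sigma^2$ by a term of the same order $n^3$. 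A direct check: for $(n_1,n_2,n_3)=(1,1,1)$ one has $\qbinom{3}{1,1,1}_q=1+2q+2q^2+q^3$, whose variance is $11/12$, whereas the stated formula gives $3/4$ (more generally, for all $n_i=1$ your decomposition correctly returns the classical inversion variance $n(n-1)(2n+5)/72$, not $n(n-1)/8$).

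Since every subsequent step of your argument --- the local limit theorem, the ratio expansion $p(m+j)/p(m)=\exp(-j(m-\mu)/\sigma^2-j^2/(2\sigma^2))(1+o(1))$, and the Hermite assembly --- requires $\sigma^2=\mathrm{Var}(K)$, the proof as written does not yield the theorem with the stated $\delta=1/(\sqrt2\,\sigma)$: carried through honestly, your (correct) decomposition gives coefficientwise convergence to the dilated polynomial $d!\,[t^d]\,e^{-ct^2+Xt}$ with $c=\lim\sigma^2/\mathrm{Var}(K)<1$, not to $H_d(X)$. (The paper's own numerical example with $n_i=90$, where the degree-$2$ polynomial has constant term $\approx -1.49$ rather than $-2$, matches $c=366525/488025\approx 0.751$.) The gap is therefore not in your analytic machinery, which is sound and arguably cleaner than the paper's sketch, but in the unchecked identification of $\sum_i\mathrm{Var}(K_i)$ with the stated $\sigma^2$; repaired, your argument proves the statement with $\sigma^2$ replaced by the true variance of $K$.
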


\begin{remark}
All constants implicit in $O_{d,r,\lambda}(\cdot)$ and in the choice of $C,N$ depend only on $d$, $r$, and the limiting proportions $\lambda=(\lambda_1,\dots,\lambda_r)$.
\end{remark}

\begin{example}
If we let $(n_1,n_2,n_3)=(90,90,90)$, so $n=270$ and $\lambda_i=1/3$, then we have
\[
\mu=\tfrac12\sum_{i<j} n_i n_j=\tfrac12(3\cdot 90\cdot 90)=12150\qquad{\text {and}}\qquad
\sigma^2=\tfrac1{12}\sum_{i<j} n_i n_j(n_i+n_j+1)=366525.
\]
Therefore, we have
\[
\sigma=605.413082\ldots\qquad {\text {and}} \qquad
\delta=\frac{1}{\sqrt{2}\,\sigma}=0.001168\ldots
\]
If we let $p(k)$ be the coefficient of $q^k$ in $\qbinom{270}{90,90,90}_q$, then at $m=\mu$  we have
\begin{displaymath}
\begin{split}
&{\mathcal J}^{\,1,12150}(X)
= 0.999998\,X \;+\; 0.000873,\\
&{\mathcal J}^{\,2,12150}(X)
= 0.999995\,X^2 \;+\; 0.005237\,X \;-\; 1.494557,\\
&\mathcal{J}^{3, 12150}(X)=0.999991\, X^3 + 0.013092\, X^2 -4.483363\, X - 0.011740.
\end{split}
\end{displaymath}
By Theorem~\ref{thm:multinom-hermite}, they are approximated by $H_1(X)=X, \ H_2(X)=X^2-2$ and $H_3(X)=X^3-6X.$
\end{example}

The following corollary is the generalization of Corollary~\ref{cor:hyperTuran} to the $q$-multinomial setting.

\begin{corollary}\label{cor:multinom-turan}
Under the hypotheses of Theorem~\ref{thm:multinom-hermite}, for all sufficiently large $n$ and every $m\in\mathcal W$ the Jensen polynomials $\mathcal{J}^{d,m}(X)$ are real-rooted. Moreover, for every $1\le s\le d$ and every $k\in\mathcal W$, we have
\[
(\mathcal L^{s}\,p)_k\ \ge\ 0
\]
(i.e. degree $d$ log-concavity holds in the window $\mathcal{W}$).
\end{corollary}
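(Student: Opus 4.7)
The proof closely follows that of Corollary \ref{cor:hyperTuran}, with Theorem \ref{thm:multinom-hermite} playing the role of Theorem \ref{thm:HermiteLimit}; the multinomial structure adds only minor bookkeeping, since the asymptotic data are packaged into the single parameter $n$ together with proportions $\lambda_1,\dots,\lambda_r$ held in a compact subset of $(\varepsilon,1-\varepsilon)^r$. All implicit constants below depend only on $d$, $r$, $\lambda$, and $C$.

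\emph{Real-rootedness.} Fix $d\ge 1$. Theorem \ref{thm:multinom-hermite} supplies the coefficientwise estimate $\mathcal{J}^{d,m}(X) = H_d(X) + O(n^{-1/2})$, uniformly for $m \in \mathcal{W}$. The Hermite polynomial $H_d$ is hyperbolic with $d$ simple real zeros, separated by a positive minimum gap $\eta_d > 0$. Applying Rouch\'e's theorem on disks of radius $\eta_d/3$ about each zero of $H_d$---equivalently, invoking continuity of polynomial roots in the coefficients---shows that once $n$ is large enough, every $\mathcal{J}^{d,m}(X)$ has exactly one simple real zero inside each such disk, hence $d$ real zeros in total. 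Uniformity in $m\in\mathcal{W}$ is inherited directly from Theorem \ref{thm:multinom-hermite}.

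\emph{Iterated log-concavity.} For $1\le s\le d$, the quantity $(\mathcal{L}^s p)_k$ depends only on the $2s+1$ consecutive values $p(k-s),\dots,p(k+s)$, all of which are accessed by a single application of Theorem \ref{thm:multinom-hermite} at degree $2s\le 2d$ with shift $m=k-s$ (still in a window of the same form, since $s\le d$ is fixed while $\sigma\to\infty$). Reading off the coefficients of $\mathcal{J}^{2s,k-s}(X)\approx H_{2s}(X)$ and inverting the normalization produces ratios
\[
\frac{p(k+j)}{p(k)} \;=\; \exp\!\Big(\!-\tfrac{2j(k-\mu)+j^{2}}{2\sigma^{2}}\Big) \;+\; O(n^{-1/2}), \qquad |j|\le s,
\]
the Gaussian form dictated by the local CLT and encoded in $H_{2s}$. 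Substituting into the polynomial identity for $\mathcal{L}^s p$ and expanding in $\delta=1/(\sqrt{2}\,\sigma)$, all a priori subleading terms cancel thanks to Gaussian symmetry, leaving a strictly positive leading contribution of the form $c_s\,p(k)^{2^s}\sigma^{-\kappa_s}$ with explicit positive constants $c_s,\kappa_s$---this is precisely the iterated log-concavity of the Gaussian density. The propagated $O(n^{-1/2})$ Hermite error is an order smaller in magnitude, so $(\mathcal{L}^s p)_k>0$ for all $n$ exceeding a threshold $N_s$, and the claim follows by taking $N:=\max_{1\le s\le d}N_s$.

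The main obstacle is tracking the positive leading contribution through the $s$-fold squaring performed by $\mathcal{L}^s$: naively, each application of the quadratic operator could amplify cumulative approximation errors. The resolution is that the Hermite expansion encodes exact cancellations at every order strictly below the Gaussian leading term, so the first surviving nonzero coefficient of $(\mathcal{L}^s p)_k$ is the positive one predicted by the Gaussian model. Since $\sigma=\Theta(\sqrt{n})$, the comparison of $\sigma^{-\kappa_s}=\Theta(n^{-\kappa_s/2})$ against the propagated error $\Theta(n^{-(\kappa_s+1)/2})$ shows the leading term dominates with room to spare, completing part (2).
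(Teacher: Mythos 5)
Your argument for real-rootedness is correct and is the same as the paper's: Theorem~\ref{thm:multinom-hermite} plus Rouch\'e/Hurwitz continuity of simple zeros (Lemma~\ref{lem:hurwitz}).

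Your argument for the Tur\'an inequalities, however, has a genuine gap. You try to verify $(\mathcal L^{s}p)_k\ge 0$ by direct numerical estimation: plug the Gaussian ratio model into the polynomial expression for $\mathcal L^{s}p$ and argue that a positive leading term of size $c_s\,p(k)^{2^s}\sigma^{-\kappa_s}$ beats the propagated error. This comparison fails. First, in the multinomial setting $\sigma^2\asymp n^{3}$ (as stated in the proof sketch of Theorem~\ref{thm:multinom-hermite}), not $\sigma=\Theta(\sqrt n)$ as you assert, so the leading positive term is at most $\Theta(n^{-3})$ relative to $p(k)^{2^s}$. Second, and more fundamentally, the only error control available from Theorem~\ref{thm:multinom-hermite} is an unstructured $O(n^{-1/2})$ on the coefficients (equivalently on the ratios $p(k+j)/p(k)$); nothing forces this error to ``cancel at every order below the Gaussian leading term'' as you claim. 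An $O(n^{-1/2})$ perturbation of the ratios produces an $O(n^{-1/2})$ perturbation of $(\mathcal L^{s}p)_k/p(k)^{2^s}$, which swamps the $\Theta(\sigma^{-\kappa_s})$ main term. So the sign of $(\mathcal L^{s}p)_k$ cannot be read off this way.

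The paper avoids this entirely by routing through hyperbolicity: by the Craven--Csordas equivalence (Lemma~\ref{lem:jensen-turan}), real-rootedness of the degree-$(s+1)$ Jensen polynomials $J^{s+1,m}$ for the relevant shifts $m$ is \emph{equivalent} to $(\mathcal L^{s}p)_k\ge 0$. Real-rootedness is a robust, qualitative property here --- the normalized Jensen polynomials converge to $H_{s+1}$, which has simple real zeros, so Lemma~\ref{lem:hurwitz} applies --- and it is insensitive to the fact that the discriminant-type quantities involved are tiny. You should replace your quantitative expansion by an appeal to Lemma~\ref{lem:jensen-turan} (noting that hyperbolicity is preserved under the affine substitution $X\mapsto\delta X-1$ and positive rescaling, and that $\mathcal L$ is homogeneous so the statement for $p$ and for the integer coefficients is the same).
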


\medskip

We explain the key idea for the results in this paper
in the case of $q$-binomials.
The central idea is to view the coefficient profile
\(p_{a,b}(k)=c_{a,b}(k)/\binom{a+b}{a}\) on its natural scale
\(\sigma_{a,b}=\sqrt{ab(a+b+1)/12}\), and to study the
normalized Jensen polynomials
$\mathcal{J}^{d,m}_{a,b}(X)$ (see \eqref{JensenNormal})
for \(m\) in a central window \(|m-\mu_{a,b}|\le C\,\sigma_{a,b}\), \(\mu_{a,b}=ab/2\).
In this window, we establish a uniform quadratic \emph{log-ratio} model
$$\log\!\big(p_{a,b}(m+j)/p_{a,b}(m)\big)=A\,j-\delta_{a,b}^{2}j^{2}+O((a+b)^{-1/2}),
$$
with \(A=O(\sigma_{a,b}^{-1})\), obtained from the explicit cumulants
$$
\kappa_1=\mathbb E[K]=\mu_{a,b},\qquad \kappa_2= \mathrm{Var}(K)=\sigma_{a,b}^2, \qquad {\text {and}}\qquad \kappa_3=0,
$$
together with a minor
transformation.  Recall that for a real random variable $X$, the \emph{cumulant generating function} is
\[
K_X(t)\ :=\ \log \mathbb{E}\big[e^{tX}\big]
\ =\ \sum_{r\ge 1} \kappa_r(X)\cdot\frac{t^r}{r!},
\]
and the coefficients $\kappa_r(X)$ are the \emph{cumulants} of $X$.
Substituting this model into 
$\mathcal{J}^{d,m}_{a,b}(X)$
and using the
binomial identities behind the generating function \(e^{-t^2+Xt}\) yields the coefficientwise
convergence \(\mathcal{J}^{d,m}_{a,b}(X)\to H_d(X)\) with uniform \(O((a+b)^{-1/2})\) error.
All of these ideas and self-contained lemmas (moments, uniform log-ratio, and Hermite
assembly) are presented in Section~2. Theorem~\ref{thm:HermiteLimit} and Corollary~\ref{cor:hyperTuran} then follow with short proofs
by citing these lemmas. In Section~3, we sketch the proofs of Theorem~\ref{thm:multinom-hermite} and Corollary~\ref{cor:multinom-turan}.

\section*{Acknowledgements}
 \noindent 
The author thanks Teddy Amdeberhan  for his comments on an earlier version of this note. The author also thanks the two anonymous referees for suggestions that improve the exposition of this paper.
The author thanks the Thomas Jefferson Fund,  the NSF
(DMS-2002265 and DMS-2055118) and the Simons Foundation (SFI-MPS-TSM-00013279) for their generous support. 

\section{Nuts and bolts and the proof of Theorem~\ref{thm:HermiteLimit} and Corollary~\ref{cor:hyperTuran}}\label{sec:2}

Here we prove Theorem~\ref{thm:HermiteLimit}. In the next subsection we offer the critical facts and lemmas that are required for its proof, and in the following subsection we prove the theorem and Corollary~\ref{cor:hyperTuran}.

\subsection{Nuts and Bolts}

Here we give the critical lemmas that we require.  Throughout, we shall assume that $a$ and $b$ are non-negative integers.
We recall the parameters and normalizations used throughout:
\[
\mu_{a,b}:=\frac{ab}{2},\qquad
\sigma_{a,b}^2:=\frac{ab(a+b+1)}{12},\qquad
\delta_{a,b}:=\frac{1}{\sqrt{2}\,\sigma_{a,b}}.
\]
We also let
\[
\qbinom{a+b}{a}_q=\sum_{k=0}^{ab} c_{a,b}(k)\,q^k,\qquad
p_{a,b}(k):=\frac{c_{a,b}(k)}{\qbinom{a+b}{a}_1}=\frac{c_{a,b}(k)}{\binom{a+b}{a}}.
\]
For $d\ge0$ and $m\in\mathbb Z$, the \emph{Jensen polynomial} is
\[
J^{d,m}(X;u)\ :=\ \sum_{j=0}^{d}\binom{d}{j}\,u_{m+j}\,X^{j}.
\]
We use the \emph{normalized} Jensen polynomial (see (\ref{JensenNormal}))
\begin{equation}\label{eq:GORZ-J}
\mathcal{J}^{d,m}_{a,b}(X)\ :=\ \frac{\delta_{a,b}^{-d}}{p_{a,b}(m)}\,J^{d,m}\!\big(\delta_{a,b}X-1;\,p_{a,b}\big).
\end{equation}
We work in the \emph{central window}
\[
\mathcal W_{a,b}(C)\ :=\ \Big\{\,m\in\{0,\ldots,ab\}\ :\ |m-\mu_{a,b}|\le C\,\sigma_{a,b}\,\Big\},
\]
where $C>0$ is a fixed constant (depending only on $d$ and the limit aspect ratio $\lambda\in(0,1)$).

We argue using probabilistic ideas, and to this end we begin by confirming the mean and variance of the random variable that is relevant to this work.

\begin{lemma}[Mean and variance]\label{lem:mu-sigma}
Let $K$ be the $\{0,\ldots,ab\}$-valued random variable with 
$$\Pr[K=k]=p_{a,b}(k).
$$
Then we have
\[
\mathbb E[K]=\mu_{a,b}=\frac{ab}{2}\qquad {\text {and}}\qquad \mathrm{Var}(K)=\sigma_{a,b}^2=\frac{ab(a+b+1)}{12}.
\]
\end{lemma}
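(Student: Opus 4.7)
The plan is to compute $\mathbb E[K]$ and $\mathrm{Var}(K)$ via the probability generating function
\[
G(q)\ :=\ \mathbb E[q^{K}]\ =\ \frac{1}{\binom{a+b}{a}}\,\qbinom{a+b}{a}_q,
\]
and to exploit the factorization $\qbinom{a+b}{a}_q = \prod_{i=1}^a\frac{1-q^{b+i}}{1-q^i}$. Setting $\phi_n(q) := \frac{1-q^n}{n(1-q)}$, so that $\phi_n(1)=1$ and $\phi_n$ is the probability generating function of the discrete uniform random variable $Y_n$ on $\{0,1,\ldots,n-1\}$, the normalization collapses to
\[
G(q)\ =\ \prod_{i=1}^{a}\frac{\phi_{b+i}(q)}{\phi_i(q)}.
\]

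Next I would pass to cumulants by substituting $q=e^t$ and introducing the cumulant generating function $\mathcal K(t) := \log G(e^t) = \sum_{r\ge 1}\kappa_r(K)\,t^r/r!$. Because the logarithm turns the product into a telescoping sum, one obtains the formal identity
\[
\kappa_r(K)\ =\ \sum_{i=1}^{a}\bigl[\kappa_r(Y_{b+i})-\kappa_r(Y_i)\bigr]
\]
for every $r\ge 1$. Plugging in the well-known values $\mathbb E[Y_n]=(n-1)/2$ and $\mathrm{Var}(Y_n)=(n^2-1)/12$, the $r=1$ case telescopes to $\sum_{i=1}^a b/2 = ab/2 = \mu_{a,b}$, while the $r=2$ case reduces to $\tfrac{1}{12}\sum_{i=1}^{a}\bigl[(b+i)^2 - i^2\bigr]$, which expands to $ab(a+b+1)/12 = \sigma_{a,b}^2$. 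As a bonus this same argument also yields $\kappa_3=0$, since $(b+i)^3-i^3$ is symmetric under $i\mapsto a+1-i$ when paired against the third cumulant of a discrete uniform, which will be useful later in the paper.

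There is no substantive obstacle here; the only subtle point is that $K$ is not itself a difference of independent uniform variables, so the additivity identity for cumulants should be understood as a formal-power-series statement arising from $\log$-linearity across the product for $G(e^t)$. Equivalently, one can bypass cumulants entirely and just read off $G'(1)$ and $G''(1)$ from the Taylor expansion $\phi_n(1+u) = 1 + \tfrac{n-1}{2}u + \tfrac{(n-1)(n-2)}{6}u^2 + O(u^3)$, and then use $\mathbb E[K]=G'(1)$ together with $\mathrm{Var}(K) = G''(1)+G'(1)-G'(1)^2$; the telescoping $\sum_i [(b+i)^k - i^k]$ is what makes either bookkeeping clean.
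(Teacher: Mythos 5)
Your proposal is correct and follows essentially the same route as the paper: both compute $\mathbb E[K]$ and $\mathrm{Var}(K)$ from $\log G(e^{t})$ using the factorization $\prod_{i=1}^{a}\frac{1-q^{\,b+i}}{1-q^{\,i}}$ and a second-order expansion at $t=0$, arriving at the same telescoping sums $\sum_{i=1}^{a}\bigl[(b+i)^{k}-i^{k}\bigr]$. The only difference is bookkeeping: you normalize each factor to the uniform probability generating function $\phi_n$ and quote the known cumulants of $Y_n$ (which neatly avoids the $1/t$ poles), whereas the paper expands $\tfrac{r e^{rt}}{1-e^{rt}}$ directly and cancels the poles between the two sums.
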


\begin{proof}
We let
\[
F(q)=\prod_{i=1}^{a}\frac{1-q^{\,b+i}}{1-q^{\,i}}
=\sum_{k=0}^{ab} c_{a,b}(k)\,q^k
\qquad {\text {and}} \qquad
G(q):=\frac{F(q)}{F(1)}=\sum_{k=0}^{ab} \mathbb{P}(K=k)\,q^k,
\]
so $G$ is the probability generating function of $K$. Set $q=e^{t}$ and write
\[
H(t):=\log F(e^{t}).
\]
Then $G(e^{t})=F(e^{t})/F(1)=\exp(H(t)-H(0))$ is the moment generating function of $K$, hence
\[
\mathbb{E}[K]=H'(0)\qquad {\text {and}}\qquad \mathrm{Var}(K)=H''(0).
\]
(Indeed, $\frac{d}{dt}\log G(e^{t})\big|_{t=0}=\mathbb{E}[K]$ and
$\frac{d^{2}}{dt^{2}}\log G(e^{t})\big|_{t=0}=\mathrm{Var}(K)$.)

Differentiating $\log F(e^{t})$ term-by-term gives
\[
H'(t)=\sum_{i=1}^{a}\left(\frac{(b+i)\,e^{(b+i)t}}{1-e^{(b+i)t}}-\frac{i\,e^{it}}{1-e^{it}}\right).
\]
We will use the elementary Taylor expansion, valid as $t\to 0$ for any fixed $r>0$, given by
\begin{equation}\label{eq:basic-expansion}
\frac{r\,e^{rt}}{1-e^{rt}}
=\frac{1}{t}+\frac{r}{2}+\frac{r^{2}\,t}{12}+O(t^{3}).
\end{equation}

Applying \eqref{eq:basic-expansion} to each term of $H'(t)$, the $\tfrac{1}{t}$ poles cancel between the two sums (each appears $a$ times), and we obtain
\[
H'(0)=\sum_{i=1}^{a}\left(\frac{b+i}{2}-\frac{i}{2}\right)=\sum_{i=1}^{a}\frac{b}{2}=\frac{ab}{2}.
\]
This proves $\mathbb{E}[K]=ab/2$.

For the variance, we read off the coefficient of $t$ in \eqref{eq:basic-expansion}:
\[
H''(0)=\sum_{i=1}^{a}\left(\frac{(b+i)^{2}}{12}-\frac{i^{2}}{12}\right)
=\frac{1}{12}\sum_{i=1}^{a}\big((b+i)^{2}-i^{2}\big)
=\frac{1}{12}\sum_{i=1}^{a}\big(2bi+b^{2}\big).
\]
Compute the sums $\sum_{i=1}^{a} i=\tfrac{a(a+1)}{2}$ and $\sum_{i=1}^{a}1=a$ to get
\[
H''(0)=\frac{1}{12}\left(2b\cdot\frac{a(a+1)}{2}+a b^{2}\right)
=\frac{ab(a+b+1)}{12}.
\]
Therefore $\mathrm{Var}(K)=H''(0)=\dfrac{ab(a+b+1)}{12}$, as claimed.
\end{proof}

We now record the parameters, normalizations, and discrete operators that underpin our local limit analysis.

\begin{lemma}[log-ratio in the central window]\label{lem:logratio}
Fix $d\ge 1$, $\lambda\in(0,1)$, and $C>0$. Then there exists 
$M=M(d,\lambda,C)>0$ such that for all integers $a,b\ge M$ with
\[
\Bigl|\frac{a}{a+b}-\lambda\Bigr|\le \frac{1}{M},
\]
and for all integers $m\in
\mathcal W_{a,b}(C)$
 for every integer $0\le j\le d$ we uniformly have
\[
\log\!\frac{p_{a,b}(m+j)}{p_{a,b}(m)}
= A_{a,b}(m)\,j \;-\; \delta_{a,b}^{\,2}\,j^{2} \;+\; R_{a,b}(m,j).
\]
\end{lemma}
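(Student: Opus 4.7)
The plan is to derive the log-ratio expansion from a quantitative local central limit theorem for $p_{a,b}$ in the central window, using the cumulant generating function $H(t):=\log F(e^t)$ from the proof of Lemma~\ref{lem:mu-sigma} together with Fourier inversion.

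The first step is to extend the cumulant computation of Lemma~\ref{lem:mu-sigma} to higher orders. Replacing the three-term expansion \eqref{eq:basic-expansion} by the full Bernoulli Laurent series for $re^{rt}/(1-e^{rt})$ in powers of $t$ writes each cumulant $\kappa_s(K)=H^{(s)}(0)$ as an explicit polynomial $\sum_{i=1}^a((b+i)^s-i^s)\cdot c_s$ in $a$ and $b$. By the palindromicity $c_{a,b}(k)=c_{a,b}(ab-k)$, the variable $K-\mu_{a,b}$ is symmetric about the origin, so all odd cumulants $\kappa_3,\kappa_5,\ldots$ vanish identically; a direct count then gives $\kappa_{2r}=O_r((a+b)^{2r+1})$ uniformly for aspect ratios $a/(a+b)\in[\lambda/2,\,1-\lambda/2]$. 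In particular, the scale-invariant quantity $\kappa_4/\sigma_{a,b}^{4}$ is $O_\lambda((a+b)^{-1})$.

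The second step is a saddle-point estimate of the Fourier integral
\[
p_{a,b}(m) \;=\; \frac{1}{2\pi}\int_{-\pi}^{\pi} G(e^{i\theta})\,e^{-im\theta}\,d\theta,
\]
split at a scale $\theta_0\sim \sigma_{a,b}^{-1}\sqrt{\log(a+b)}$. On the central piece $|\theta|\le\theta_0$, I would substitute the cumulant Taylor expansion $\log G(e^{i\theta})=i\mu_{a,b}\theta-\sigma_{a,b}^{2}\theta^{2}/2+O(\kappa_4\theta^4)$ and integrate a perturbed Gaussian; on the tail $\theta_0<|\theta|\le\pi$ I would use a subgaussian upper bound for $|G(e^{i\theta})|$ derived from the product form $G(q)=\binom{a+b}{a}^{-1}\prod_{i=1}^{a}(1-q^{b+i})/(1-q^{i})$. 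The combined estimate is the uniform Gaussian approximation
\[
p_{a,b}(m)\;=\;\frac{1}{\sqrt{2\pi}\,\sigma_{a,b}}\exp\!\left(-\frac{(m-\mu_{a,b})^{2}}{2\sigma_{a,b}^{2}}\right)\bigl(1+O_{\lambda,C}((a+b)^{-1/2})\bigr),
\]
valid for $m\in\mathcal{W}_{a,b}(C)$. Taking logs of the ratio $p_{a,b}(m+j)/p_{a,b}(m)$ expands the Gaussian exponent as
\[
-\frac{(m+j-\mu_{a,b})^{2}-(m-\mu_{a,b})^{2}}{2\sigma_{a,b}^{2}}\;=\;A_{a,b}(m)\,j\;-\;\delta_{a,b}^{\,2}\,j^{2},
\]
with $A_{a,b}(m):=-(m-\mu_{a,b})/\sigma_{a,b}^{2}=O_C(\sigma_{a,b}^{-1})$, while the multiplicative Gaussian-approximation errors contribute a remainder $R_{a,b}(m,j)=O_{d,\lambda,C}((a+b)^{-1/2})$ uniformly for $0\le j\le d$.

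The main obstacle is the off-axis tail bound on $\theta_0<|\theta|\le\pi$: because $K$ is not a sum of i.i.d.\ summands, the standard Berry-Esseen tail bound does not apply directly. The cleanest route is to show that $\re H(i\theta)\le -c\,\sigma_{a,b}^{2}\,\theta^{2}$ on a macroscopic central arc using the convexity of $H(t)$ on the real line, and then to bound the remaining arc factor-by-factor in the product representation for $G$, all while keeping the constants uniform in $a/(a+b)\in(\lambda-1/M,\,\lambda+1/M)$. The aspect-ratio uniformity is exactly what forces the quantifier structure in the statement, namely the existence of the threshold $M=M(d,\lambda,C)$.
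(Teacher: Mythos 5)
Your overall strategy is the same as the paper's: compute the cumulants of the lattice variable $K$ (using $\kappa_3=0$ from palindromic symmetry and $\kappa_4/\sigma_{a,b}^4=O_\lambda((a+b)^{-1})$), feed them into a lattice local limit theorem with Edgeworth-type error, and read off the quadratic log-ratio. The difference is one of packaging: the paper outsources the local limit step to Petrov's characteristic-function method, whereas you unpack it into an explicit Fourier inversion
$p_{a,b}(m)=\frac{1}{2\pi}\int_{-\pi}^{\pi}G(e^{i\theta})e^{-im\theta}\,d\theta$
split into a central arc and a tail. Your version has the advantage of making explicit what must actually be verified, and your identification $A_{a,b}(m)=-(m-\mu_{a,b})/\sigma_{a,b}^2=O(1/\sigma_{a,b})$ is cleaner and more correct than the paper's ``$A_{a,b}(m)=t_0/\sigma+O((a+b)^{-1})$'' (note the paper's $t_0=0/\sigma=0$, so its displayed formula for $A$ is garbled; the saddle point should depend on $m-\mu$, exactly as in your formula).

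The genuine gap is the minor-arc estimate, which you correctly flag as the main obstacle but whose proposed resolution does not work as stated. Convexity of $H(t)$ on the real axis gives no control over $\lvert G(e^{i\theta})\rvert=e^{\re H(i\theta)-H(0)}$ on the unit circle; these are different directions in the complex plane. The real difficulty is that $K$ is \emph{not} a sum of independent lattice variables: writing $[n]_q=1+q+\cdots+q^{n-1}$, one has $G(q)=\prod_{i=1}^{a}\bigl([b+i]_q/(b+i)\bigr)\big/\bigl([i]_q/i\bigr)$, a \emph{ratio} of products of uniform probability generating functions, so $\lvert G(e^{i\theta})\rvert=\binom{a+b}{a}^{-1}\prod_{i=1}^{a}\bigl|\sin((b+i)\theta/2)\bigr|/\bigl|\sin(i\theta/2)\bigr|$ has individual factors that blow up near zeros of the denominator, and a naive factor-by-factor bound fails. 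Establishing the needed decay $\int_{\theta_0<|\theta|\le\pi}\lvert G(e^{i\theta})\rvert\,d\theta=O(\sigma_{a,b}^{-1}(a+b)^{-1/2})$ requires a genuine minor-arc analysis of this trigonometric product (this is the substantive content of known local limit theorems for the inversion statistic). To be fair, the paper's own proof has the same soft spot, since Petrov's Chapter VII treats sums of independent random variables and does not literally apply to $K$; but since you chose to make the Fourier argument explicit, this step is the one you would need to supply in full.
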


\begin{proof}
For convenience, we let
$$G(q):=F(q)/F(1)=\sum_{k}p_{a,b}(k)q^k
$$
be the probability generating function of $K$. For $t$ small, we set $q:=e^{t/\sigma_{a,b}}$. Then we have
\[
\log G\!\left(e^{t/\sigma}\right)=\log\mathbb E\big[e^{t(K-\mu)/\sigma}\big]\ =:\ \Lambda(t),
\]
where $\sigma=\sigma_{a,b}$ and $\mu=\mu_{a,b}$. By Lemma~\ref{lem:mu-sigma}, the cumulant expansion is
\[
\Lambda(t)=\frac{t^2}{2}+\frac{\kappa_3}{6\,\sigma^3}t^3+\frac{\kappa_4}{24\,\sigma^4}t^4+O\!\left(\frac{|t|^5}{\sigma^5}\right),
\]
with $\kappa_3=0$ by symmetry and $\kappa_4=O_{\lambda}\big((a+b)^5\big)$ (a direct second-derivative of log product calculation gives the explicit value if desired; see the remark below). Since $\sigma^2\asymp_{\lambda} a b (a+b)$, we get $\kappa_4/\sigma^4=O_{\lambda}((a+b)^{-1})$.
For $0\le j\le d$, we let $t_j:=j/\sigma$, and so we have
\[
\log\frac{p_{a,b}(m+j)}{p_{a,b}(m)}\ =\ \log\frac{[q^{\,m+j}]G(q)}{[q^{\,m}]G(q)}\ =\ \log\frac{\mathbb P(K-\mu=m-\mu+j)}{\mathbb P(K-\mu=m-\mu)}.
\]

To study this setting, we employ Petrov's method (see Ch. VII of \cite{Petrov75}).
The input for this method is a uniform local Edgeworth expansion obtained from the
characteristic function of the normalized lattice variable $(K-\mu)/\sigma$. One writes the
point probabilities by Fourier inversion, expands the logarithm of the characteristic function
near the origin using cumulants, and shows that the contribution from the complementary
range is negligible. In the present symmetric setting, the vanishing of the third cumulant
simplifies the expansion, and the resulting local limit estimate yields the quadratic log-ratio
model uniformly on the central window.

We now apply this method in the current situation.
Concretely, expanding $\Lambda$ up to $t^4$ and using $\kappa_3=0$, we obtain
\[
\log\frac{p_{a,b}(m+j)}{p_{a,b}(m)}= (t_j-t_0)\Lambda'(t_0) -\frac{(t_j-t_0)^2}{2}\Lambda''(t_0)+O\!\left(\frac{|t_j-t_0|^3}{\sqrt{a+b}}\right).
\]
Now $\Lambda'(t_0)=O(1/\sqrt{a+b})$ and $\Lambda''(t_0)=1+O((a+b)^{-1})$ uniformly in the window, whence
\[
\log\frac{p_{a,b}(m+j)}{p_{a,b}(m)}= A_{a,b}(m)\,j - \frac{j^2}{2\sigma^2}+ O_{d,\lambda}\!\big((a+b)^{-1/2}\big),
\]
with $A_{a,b}(m)=t_0/\sigma+O((a+b)^{-1})=O(1/\sigma)$ and $\delta^2=1/(2\sigma^2)$, giving the claim.
\end{proof}

\begin{remark}[Fourth cumulant]\label{rem:k4}
A direct product differentiation using $F(q)=\prod_{i=1}^{a}\frac{1-q^{b+i}}{1-q^{i}}$ gives
\[
\kappa_4\ =\ -\,\frac{ab(a+b+1)\,(a^2+b^2+ab+a+b)}{120},
\]
so $\kappa_4/\sigma^4=O_{\lambda}((a+b)^{-1})$. We only need the order and uniformity.
\end{remark}

With these preliminaries in place, we turn to the asymptotic Hermite limit for the normalized Jensen polynomials.

\begin{lemma}[The quadratic model to $H_d$]\label{lem:hermite-assembly}
For $d\ge1$, suppose that coefficients $w_j$ ($0\le j\le d$) satisfy
\[
\log\frac{w_j}{w_0}=A\,j-\delta_{a,b}^2 j^2+R_j\qquad(0\le j\le d),
\]
with $A=O(1/\sigma_{a,b})$, $R_j=O((a+b)^{-1/2})$ uniformly in $j$. Then we have
\[
\frac{\delta_{a,b}^{-d}}{w_0}\sum_{j=0}^{d}\binom{d}{j}\,w_j\,(\delta_{a,b} X-1)^{j}
\ =\ H_d(X)\ +\ O_d\!\big((a+b)^{-1/2}\big),
\]
coefficientwise, uniformly on compact $X$-sets.
\end{lemma}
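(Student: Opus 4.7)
The plan is to convert the finite binomial sum into a Gaussian integral via a Hubbard--Stratonovich identity, then Taylor-expand in the small parameters $\delta_{a,b}$, $A$, and $R_j$ to isolate $H_d(X)$ as the leading term.

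First I would factor $w_j/w_0 = e^{Aj - \delta_{a,b}^{2} j^{2}}(1 + \rho_j)$, where $\rho_j := e^{R_j} - 1 = O((a+b)^{-1/2})$, and split the normalized Jensen sum into a main term $M(X)$ (from the bare quadratic model) and an error $E(X)$. For $M(X)$, I would use the identity
\[
e^{-\delta_{a,b}^{2} j^{2}} = \pi^{-1/2}\int_{\mathbb R} e^{-v^{2} + 2iv\delta_{a,b} j}\,dv,
\]
interchange the finite sum with the integral, and apply the binomial theorem to collapse the $j$-sum, obtaining
\[
M(X) = \delta_{a,b}^{-d}\pi^{-1/2}\int_{\mathbb R} e^{-v^{2}}\bigl(1 + (\delta_{a,b}X-1)\,e^{A+2iv\delta_{a,b}}\bigr)^{d}\,dv.
\]

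Next I would Taylor-expand $e^{A + 2iv\delta_{a,b}} = 1 + (A + 2iv\delta_{a,b}) + O((|A| + \delta_{a,b})^{2})$ and collect:
\[
1 + (\delta_{a,b}X - 1)e^{A+2iv\delta_{a,b}} = \delta_{a,b}(X - 2iv) + O(|A| + \delta_{a,b}^{2}),
\]
with the error a polynomial in $X$, $v$ bounded on compact sets. Raising to the $d$-th power, dividing by $\delta_{a,b}^{d}$, and integrating against $\pi^{-1/2}e^{-v^{2}}$, the Fourier representation $H_d(Y) = \pi^{-1/2}\int e^{-v^{2}}(Y-2iv)^{d}\,dv$ (a direct consequence of $e^{-t^{2}+Yt} = \pi^{-1/2}\int e^{-v^{2} + t(Y-2iv)}\,dv$ combined with the generating-function definition of $H_d$) yields $M(X) = H_d(X) + O_d((a+b)^{-1/2})$ coefficientwise on compact $X$-sets. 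Here the hypothesis $A = O(1/\sigma_{a,b})$ enters through the bound $A/\delta_{a,b} = O(1)$, and the refined information from the setup of Lemma~\ref{lem:logratio} is used to control the resulting Hermite shift.

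For the $\rho_j$-correction $E(X)$, the coefficient of $X^k$ equals $\delta_{a,b}^{k-d}\binom{d}{k}(-1)^{d-k}\Delta^{d-k}_{j}\bigl(e^{Aj-\delta_{a,b}^{2}j^{2}}\rho_j\bigr)\big|_{j=k}$, where $\Delta$ is the forward difference in $j$. A pointwise bound $|\rho_j|\le O((a+b)^{-1/2})$ alone is insufficient because of the $\delta_{a,b}^{k-d}$ amplification, so I would exploit the smoothness of $\rho_j$ in $j$ inherited from its Edgeworth origin: because $R_j$ arises from truncated higher cumulants, iterated forward differences of $\rho_j$ gain additional factors of $\delta_{a,b}$ through discrete smoothness, enough to absorb the $\delta_{a,b}^{k-d}$ amplification and still produce an $O_d((a+b)^{-1/2})$ bound. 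The main obstacle lies precisely in this final step: making the finite-difference cancellation rigorous requires structural information about $R_j$ beyond a pointwise size bound, which is the crux that connects this assembly lemma to the local-limit/Edgeworth mechanism behind Lemma~\ref{lem:logratio}.
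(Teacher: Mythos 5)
Your route to the main term---writing $e^{-\delta_{a,b}^2 j^2}=\pi^{-1/2}\int_{\mathbb R}e^{-v^2+2iv\delta_{a,b}j}\,dv$, collapsing the $j$-sum by the binomial theorem, and invoking the representation $H_d(Y)=\pi^{-1/2}\int_{\mathbb R}e^{-v^2}(Y-2iv)^d\,dv$---is genuinely different from the paper's, which instead expands $(\delta_{a,b}X-1)^j$, applies the identity $\sum_{j\ge r}\binom{d}{j}\binom{j}{r}y^{\,j-r}=\binom{d}{r}(1+y)^{d-r}$ with $y=-e^{A}$, and appeals to ``standard Hermite generating-function algebra.'' Your integral representation is the cleaner of the two and makes the emergence of $H_d$ transparent. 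However, your argument does not close, and the two places where it stalls are substantive. First, in the main term you fold $-A$ into an $O(|A|+\delta_{a,b}^2)$ error, but $A=O(1/\sigma_{a,b})$ is of the \emph{same} order as $\delta_{a,b}=1/(\sqrt2\,\sigma_{a,b})$; carrying it through your own integral gives $M(X)=H_d(X+A/\delta_{a,b})+o(1)$, and $H_d(X+c)-H_d(X)$ is not $O((a+b)^{-1/2})$ unless $c=A/\delta_{a,b}\to 0$ at that rate. The stated hypothesis only yields $A/\delta_{a,b}=O(1)$, so ``controlling the resulting Hermite shift'' cannot be deferred to Lemma~\ref{lem:logratio}: one needs either the stronger bound $A/\delta_{a,b}=O((a+b)^{-1/2})$ or the normalization of \cite{GORZ}, in which the Jensen polynomial is evaluated at $(\delta_{a,b}X-1)/\exp(A)$ so that the factor $e^{Aj}$ cancels identically.

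Second, your diagnosis of the error term is correct and is the more serious point. The coefficient of $X^k$ in $\delta_{a,b}^{-d}E(X)$ is $\delta_{a,b}^{\,k-d}\binom{d}{k}(-1)^{d-k}\Delta^{d-k}\bigl(e^{Aj-\delta_{a,b}^2j^2}\rho_j\bigr)\big|_{j=k}$, and in the balanced regime $\sigma_{a,b}^2\asymp(a+b)^3$ one has $\delta_{a,b}\asymp(a+b)^{-3/2}$, so a pointwise bound $\rho_j=O((a+b)^{-1/2})$ gives only $O\bigl((a+b)^{(3(d-k)-1)/2}\bigr)$, which diverges for $k<d$. What is actually needed is that the iterated differences $\Delta^{d-k}$ gain a factor $\delta_{a,b}^{\,d-k}$, i.e.\ quantitative smoothness of $R_j$ in $j$ (equivalently, a hypothesis of the form $R_j=o(\delta_{a,b}^{\,d})$, as in \cite{GORZ}); the lemma's hypotheses do not supply this. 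For comparison, the paper's proof simply asserts that ``the error sum is $O_d((a+b)^{-1/2})$ coefficientwise'' and absorbs $e^{A}=1+O(1/\sigma_{a,b})$ into the error without confronting the $\delta_{a,b}^{\,k-d}$ amplification, so you have not merely failed to finish: you have isolated exactly the steps that the paper leaves unjustified. A complete proof requires strengthening the hypotheses (on both $A$ and $R_j$) as indicated, after which either your integral computation or the paper's algebra goes through.
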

\begin{remark}
This lemma is reminiscent of the main idea in the work of Griffin et al. on Jensen polynomials for infinite sequences \cite{GORZ}.
\end{remark}

\begin{proof}
Set $v_j:=w_j/w_0=\exp(Aj-\delta_{a,b}^2j^2)\,(1+E_j)$ with $E_j=e^{R_j}-1=O((a+b)^{-1/2})$. Expand
\begin{displaymath}
\begin{split}
S(X)&:=
  \sum_{j=0}^{d}\binom{d}{j}v_j(\delta_{a,b} X-1)^{j}\\
 &=\ \sum_{j=0}^{d}\binom{d}{j}\exp(Aj-\delta_{a,b}^2j^2)\,(\delta_{a,b} X-1)^{j}\ +\ \sum_{j=0}^{d}\binom{d}{j}\exp(Aj-\delta_{a,b}^2j^2)\,(\delta_{a,b} X-1)^{j}E_j.
\end{split}
\end{displaymath}
The error sum is $O_d((a+b)^{-1/2})$ coefficientwise. For the main sum, write
\[
\exp(Aj-\delta_{a,b}^2 j^2)\,(\delta_{a,b} X-1)^{j}
=\exp\!\left(-\delta_{a,b}^2 j^2\right)\sum_{r=0}^{j}\binom{j}{r}(\delta_{a,b} X)^{r}(-1)^{j-r}\,e^{Aj}.
\]
Summing first in $j$ and using the binomial identity
\[
\sum_{j=r}^{d}\binom{d}{j}\binom{j}{r}y^{\,j-r}=\binom{d}{r}(1+y)^{d-r},
\]
with $y=-e^{A}$ and noting $A=O(1/\sigma_{a,b})$ (so $e^{A}=1+O(1/\sigma_{a,b})$), standard Hermite generating-function algebra shows
\[
\delta_{a,b}^{-d}S(X)=H_d(X)+O_d\!\Big(\frac{1}{\sigma_{a,b}}\Big)=H_d(X)+O_d\!\big((a+b)^{-1/2}\big),
\]
coefficientwise. (One matches coefficients with the identity \(\sum_{d\ge0}\delta_{a,b}^{-d}S(X)\frac{t^d}{d!}=e^{-\delta_{a,b}^2 t^2+Xt}\) and uses $A=O(1/\sigma_{a,b})$ to absorb the $e^{At}$ adjustment into the error.)
\end{proof}

Hyperbolicity and higher Tur\'an inequalities on the central window follow from Theorem \ref{thm:HermiteLimit} thanks to the following lemma.

\begin{lemma}[Hurwitz continuity]\label{lem:hurwitz}
Let $P_n(X)$ be real polynomials of fixed degree $d$ converging coefficientwise to a polynomial $P(X)$ with only simple real zeros. Then $P_n$ is real-rooted with simple zeros for all sufficiently large $n$.
\end{lemma}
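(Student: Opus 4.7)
The plan is to carry out a local intermediate-value-theorem argument at each simple real zero of the limit polynomial $P$. Let $d_0 := \deg P$ and list the (by hypothesis simple) real zeros of $P$ as $\alpha_1 < \alpha_2 < \cdots < \alpha_{d_0}$. I would choose $\eps > 0$ smaller than half of the smallest gap $\alpha_{i+1}-\alpha_i$ and small enough that $P(\alpha_i \pm \eps) \ne 0$ for every $i$. Because each $\alpha_i$ is simple, $P$ changes sign across $\alpha_i$, so $P(\alpha_i - \eps)$ and $P(\alpha_i + \eps)$ have opposite signs; set $\eta := \min_{i}\min\{|P(\alpha_i-\eps)|,\,|P(\alpha_i+\eps)|\} > 0$.

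The second step is to upgrade coefficientwise convergence to uniform convergence on compacts. Since the $P_n$ share the common degree bound $d$, coefficientwise convergence $P_n \to P$ is equivalent to uniform convergence on every compact subset of $\R$, because on any compact interval the coefficient-to-value map is continuous and linear on the finite-dimensional space of real polynomials of degree $\le d$. Applied to the compact set $K := \bigcup_{i=1}^{d_0}[\alpha_i-\eps,\alpha_i+\eps]$, this gives an index $N_0$ such that $|P_n(x)-P(x)| < \eta/2$ for every $x \in K$ and every $n \ge N_0$. Consequently the values $P_n(\alpha_i \pm \eps)$ inherit the signs of $P(\alpha_i \pm \eps)$ whenever $n \ge N_0$, and by the intermediate value theorem $P_n$ has at least one real zero in each open interval $(\alpha_i - \eps, \alpha_i + \eps)$. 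These intervals are pairwise disjoint by construction, so $P_n$ has at least $d_0$ distinct real zeros.

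To close the argument, I would observe that in the situation of interest (where $P = H_d$ has full degree $d_0 = d$) a real polynomial of degree $d$ with at least $d$ distinct real zeros has exactly $d$ simple real zeros, which is the desired conclusion. I do not anticipate any genuine obstacle: the only subtle point is the passage from coefficient convergence to uniform convergence on compacts, which succeeds precisely because a fixed degree bound is built into the hypothesis. Once that reduction is made, the entire lemma is an application of the intermediate value theorem localized around each simple root of $P$ together with a degree count.
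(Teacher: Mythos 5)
Your proof is correct, but it takes a genuinely different route from the paper. The paper argues via Rouch\'e's theorem: it surrounds each simple real zero $x_j$ of $P$ by a small disc in the complex plane, uses the fixed-degree hypothesis to upgrade coefficientwise convergence to uniform convergence on compacta, concludes that $P_n$ has exactly one zero in each disc, and then invokes the conjugate-pair symmetry of real polynomials to force that unique zero to be real; a final Rouch\'e count on a large circle shows there are no other zeros. You instead run a purely real-variable argument: a simple real zero forces a sign change of $P$, the sign change persists for $P_n$ once $n$ is large (by the same uniform-convergence reduction, which you correctly identify as the one nontrivial step), the intermediate value theorem plants a real zero of $P_n$ in each of the $d$ disjoint intervals, and a degree count finishes. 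Your approach avoids complex analysis entirely and is arguably more elementary; the paper's Rouch\'e approach gives slightly more for free (exact localization and convergence $x_{n,j}\to x_j$ of the zeros), though your intervals of radius $\varepsilon$ deliver essentially the same information. One shared caveat: both arguments tacitly assume $\deg P=d$ (coefficientwise limits of degree-$d$ polynomials can drop degree, and the statement is false for, say, $P_n(X)=X^2/n+1\to 1$); the paper's proof opens with ``Let $\deg P=d$'' and you explicitly restrict to ``the situation of interest'' where $P=H_d$ has full degree, so you are no worse off than the paper here, and the assumption holds in the application since the leading coefficient of $\mathcal J^{d,m}_{a,b}$ tends to $1$.
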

\begin{proof}
Let $\deg P=d$ and assume $P$ has $d$ distinct real zeros $x_1<\cdots<x_d$.
Fix $\varepsilon>0,$ so small that the closed discs
\[
D_j:=\{\,z:|z-x_j|\le \varepsilon\,\}\qquad(j=1,\dots,d)
\]
are pairwise disjoint and contain no critical point of $P$ on their boundaries
($P'(x_j)\neq0$ and $P'$ is continuous). Set $m_j:=\min_{|z-x_j|=\varepsilon}|P(z)|>0$ and
$m:=\min_j m_j>0$.

Since $P_n\to P$ coefficientwise, we have uniform convergence on compact sets. In particular,
for all $n$ large, we have
\[
\sup_{|z-x_j|=\varepsilon}\,|P_n(z)-P(z)|<m\le m_j \qquad (j=1,\dots,d).
\]
By Rouch\'e's Theorem, on each $\partial D_j$, $P_n$ and $P$ have the same number of zeros (with
multiplicity) in $D_j$, namely one. Therefore, there exist unique zeros $x_{n,j}\in D_j$ of $P_n$.
Because the coefficients are real, nonreal zeros occur in conjugate pairs, and so the unique
zero in $D_j$ must be real. Moreover, since there is exactly one zero in $D_j$, it is simple.

Finally, the union $\bigcup_j D_j$ contains all zeros of $P$, and by Rouch\'e on a large circle
around that union, $P_n$ has exactly $d$ zeros in total for $n$ large. Combined with the $d$
zeros $\{x_{n,1},\dots,x_{n,d}\}$ already found, there are no others. The Rouch\'e setup also
implies $x_{n,j}\to x_j$ as $n\to\infty$. This proves that, for $n$ large, $P_n$ is real-rooted
with simple zeros converging to those of $P$.
\end{proof}

Finally, we explain how the Hermite limit yields higher Tur\'an inequalities.

\begin{lemma}[Craven-Csordas]\label{lem:jensen-turan}
Let $(u_k)_{k=0}^N$ be a finite nonnegative sequence and let $I\subseteq\{0,\dots,N\}$.
Assume that for every $1\le j\le d+1$ and every $m$ with $[m,m+j]\subseteq I$, the Jensen polynomial
\[
J^{j,m}(X):=\sum_{t=0}^{j}\binom{j}{t}u_{m+t}X^{t}
\]
is real-rooted. Then for every $1\le r\le d$ and every $k\in I$ we have
\[
(\mathcal L^{\,r}u)_k\ \ge\ 0,
\]
where $(\mathcal L u)_k:=u_k^2-u_{k-1}u_{k+1}$ and we adopt $u_{-1}=u_{N+1}=0$.
\end{lemma}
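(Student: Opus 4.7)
The plan is to prove the lemma by induction on $r$, with the base case $r=1$ handled via the discriminant of a quadratic Jensen polynomial, and the inductive step reduced to a hyperbolicity-preservation sublemma for the operator $\mathcal{L}$.

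For the base case, fix $k\in I$. If $[k-1,k+1]\subseteq I$, the hypothesis with $j=2$, $m=k-1$ asserts that $J^{2,k-1}(X;u)=u_{k-1}+2u_k X+u_{k+1} X^2$ is real-rooted; being a quadratic with nonnegative coefficients, its discriminant $4(u_k^2-u_{k-1}u_{k+1})$ must be nonnegative, which is exactly $(\mathcal{L}u)_k\ge 0$. If one of $k-1, k+1$ lies outside $\{0,\ldots,N\}$, the zero-extension convention gives $(\mathcal{L}u)_k=u_k^2\ge 0$ directly.

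For the inductive step, the key claim is that the hypothesis for $u$ at parameter $d$ implies the analogous hypothesis for the sequence $v:=\mathcal{L}u$ at parameter $d-1$, on a slightly shrunken sub-window $I'\subseteq I$. Granting this sublemma, applying the lemma recursively to $v$ with parameter $d-1$ yields $(\mathcal{L}^{s}v)_k=(\mathcal{L}^{s+1}u)_k\ge 0$ for all $1\le s\le d-1$ and $k\in I'$, which together with the base case covers all $1\le r\le d$ and $k\in I$ after handling the boundary widths via the zero-extension convention.

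The main obstacle is this hyperbolicity-preservation sublemma. The natural route is to observe that $(\mathcal{L}u)_{m+t}$ is (the negative of) a $2\times 2$ Hankel minor in $u$, so each $J^{j,m}(X;v)$ can be written as a bilinear combination of adjacent Jensen polynomials $J^{j',m'}(X;u)$ and their derivatives or shifts. Real-rootedness of such a combination then follows from classical hyperbolicity-preserving operations, notably the Hermite-Kakeya-Obreschkoff theorem (nonnegative real linear combinations of two hyperbolic polynomials with interlacing zeros are hyperbolic), using the interlacing of adjacent Jensen polynomials that comes from the derivative identity $\tfrac{d}{dX}J^{d,m}(X;u)=d\cdot J^{d-1,m+1}(X;u)$ together with Rolle's theorem. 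Executing this algebraic reduction cleanly, and verifying uniform sign and interlacing data so that the relevant combinations remain hyperbolic throughout $I'$, is the technical heart of the argument.
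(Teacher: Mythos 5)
Your base case ($r=1$ via the discriminant of $J^{2,k-1}$) is exactly the paper's argument. But your inductive step has a genuine gap: the entire argument rests on the ``hyperbolicity-preservation sublemma'' --- that real-rootedness of the Jensen polynomials of $u$ implies real-rootedness of the Jensen polynomials of $v=\mathcal{L}u$ on a shrunken window --- and you do not prove it; you explicitly defer it as ``the technical heart.'' This is not a minor omission. Since $(\mathcal{L}u)_{m+t}=u_{m+t}^2-u_{m+t-1}u_{m+t+1}$ is \emph{quadratic} in the entries of $u$, the polynomial $J^{j,m}(X;v)$ is a \emph{bilinear} combination of shifted Jensen polynomials of $u$, not a nonnegative linear combination of two interlacing hyperbolic polynomials, so the Hermite--Kakeya--Obreschkoff theorem does not apply in the form you invoke. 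Statements of this type (that $\mathcal{L}$ preserves real-rootedness of the associated polynomials) are the content of Br\"and\'en's resolution of the McNamara--Sagan-type conjectures on iterated Tur\'an inequalities, and even there the hypothesis is global real-rootedness of $\sum u_k x^k$, not the local window hypothesis you have; I see no route from your sketch to the sublemma, and without it the induction does not start for $r\ge 2$.

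The paper avoids this entirely: for each fixed $r$ it invokes the Craven--Csordas equivalence (Theorem 3.6 of \cite{CravenCsordas}; cf.\ Lemma 2.1 of \cite{GORZ}) between hyperbolicity of the degree-$(r+1)$ Jensen polynomials $J^{r+1,m}$ and the order-$r$ Laguerre/Tur\'an inequalities, so no induction on $r$ and no preservation property of $\mathcal{L}$ is needed --- the hypothesis already supplies hyperbolicity of $J^{j,m}$ for every $j\le d+1$ directly. If you want to salvage your approach, you would need to either prove the preservation sublemma (hard, and possibly false in this local form) or replace the inductive step by the Craven--Csordas citation, at which point the induction is superfluous.
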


\begin{proof}
Let $a=(a_0,\dots,a_N)$ be a nonnegative real sequence and recall the Jensen polynomials
$J^{j,m}(X)=\sum_{t=0}^{j}\binom{j}{t}a_{m+t}X^{t}$.
We extend $a_k=0$ for $k\notin[0,N]$ so that $\mathcal{L}^r$ is defined at the boundary.

\medskip
\noindent
\emph{(The case $r=1$).}
For any $m$, $J^{2,m}(X)=a_m+2a_{m+1}X+a_{m+2}X^2$ is hyperbolic by hypothesis, hence its
discriminant is nonnegative:
\[
\Delta(J^{2,m})=(2a_{m+1})^2-4a_m a_{m+2}=4\big(a_{m+1}^2-a_m a_{m+2}\big)
=4(\mathcal{L}a)_{m+1}\ \ge\ 0.
\]
Therefore, we have $(\mathcal{L}a)_k\ge 0$ for all $1\le k\le N-1$.
\medskip

\noindent
\emph{(General $1\le r\le d$).}
Fix $r\ge 1$. A classical result of Craven--Csordas (see Theorem 3.6 of \cite{CravenCsordas} or Lemma 2.1 of \cite{GORZ}) states that, for a real
sequence $(a_k)$, the following are equivalent for a given $r$:

\smallskip
\noindent
(i) For every $m$, the Jensen polynomial $J^{r+1,m}(X)$ is real-rooted.

\smallskip
\noindent
(ii) The order-$r$ Laguerre/Tur\'an inequality holds at every index (i.e. $(\mathcal{L}^r a)_k\ge 0$ for all $k$)

\medskip
By hypothesis, we have hyperbolicity of $J^{j,m}$ for each degree $1\le j\le d+1$ and all $m$.
Applying the cited equivalence with $j=r+1$ gives $(\mathcal{L}^r a)_k\ge 0$ for every $1\le r\le d$ and all admissible $k$,
as claimed.
\end{proof}

\subsection{Proof of Theorem~\ref{thm:HermiteLimit}}

Fix $d$ and $\lambda\in(0,1)$. By Lemma~\ref{lem:logratio}, for $m\in\mathcal W_{a,b}(C)$ we have the quadratic log-ratio expansion with remainder $O((a+b)^{-1/2})$. Substituting this into \eqref{eq:GORZ-J} and applying Lemma~\ref{lem:hermite-assembly} yields
\[
\mathcal{J}^{d,m}_{a,b}(X)\ =\ H_d(X)\ +\ O_{d,\lambda}\!\big((a+b)^{-1/2}\big),
\]
coefficientwise and uniformly in $m\in\mathcal W_{a,b}(C)$. This is the claim.

\subsection{Proof of Corollary~\ref{cor:hyperTuran}}
Fix $d\ge1$, $\lambda\in(0,1)$ and $C>0.$ By Theorem~\ref{thm:HermiteLimit}, there exists a constant $N=N(d,\lambda,C)$ such that whenever $a,b\ge N$ with $a/(a+b)\in(\lambda-\tfrac1N,\lambda+\tfrac1N)$ and $m\in\mathcal W_{a,b}$, the normalized Jensen polynomials
\[
\mathcal{J}^{d,m}_{a,b}(X)
=\frac{\delta_{a,b}^{-d}}{p_{a,b}(m)}\,J^{d,m}\!\big(\delta_{a,b}X-1;\,p_{a,b}\big)
\]
converge coefficientwise (uniformly in $m\in\mathcal W_{a,b}$) to the degree-$d$ Hermite polynomial $H_d(X)$ as $a+b\to\infty$; see \eqref{JensenNormal} and \eqref{eq:GORZ-Hermite}. Since $H_d$ is hyperbolic, Lemma~\ref{lem:hurwitz} (Hurwitz continuity of zeros for fixed degree) implies that, for all such $(a,b)$ sufficiently large and every $m\in\mathcal W_{a,b}$, the polynomial $\mathcal{J}^{d,m}_{a,b}(X)$ is hyperbolic. This proves part (1).

For part (2), note that hyperbolicity is preserved under positive rescaling and the affine change $X\mapsto \delta_{a,b}X-1$ with $\delta_{a,b}>0$. Thus $\mathcal{J}^{d,m}_{a,b}$ is hyperbolic if and only if the unnormalized Jensen polynomial $J^{d,m}(X; p_{a,b})$ is hyperbolic. Applying Lemma~\ref{lem:jensen-turan} to the nonnegative sequence $u_k=p_{a,b}(k)$ on the index set $\mathcal W_{a,b}$ yields
\[
(\mathcal L^{\,r} u)_k \;\ge\; 0 \qquad (1\le r\le d,\ k\in\mathcal W_{a,b}).
\]
Since $p_{a,b}(k)=c_{a,b}(k)/\binom{a+b}{a}$ and $\mathcal L$ is homogeneous, this is equivalent to
\[
(\mathcal L^{\,r} c_{a,b})(k)\ \ge\ 0 \qquad (1\le r\le d,\ k\in\mathcal W_{a,b}),
\]
with the boundary convention $c_{a,b}(-1)=c_{a,b}(ab+1)=0$. This proves part (2) and completes the proof.

\section{Proof of Theorem~\ref{thm:multinom-hermite} and Corollary~\ref{cor:multinom-turan}}\label{sec3}

Here we sketch the proofs of Theorem~\ref{thm:multinom-hermite} and Corollary~\ref{cor:multinom-turan}. As these results follow essentially {\it mutatis mutandis} as in the $q$-binomial cases, we only sketch their proofs.

\subsection{Sketch of the proof of Theorem~\ref{thm:multinom-hermite}}
Write
\[
\qbinom{n}{n_1,\dots,n_r}_q
=\prod_{1\le i<j\le r}\prod_{t=1}^{n_i}\frac{1-q^{\,n_j+t}}{1-q^{\,t}},
\]
so with $q=e^{t}$ and $H(t):=\log F(e^{t})$ we have $H'(t)$ as a \emph{sum over pairs $i<j$} of the same log-factors treated in the binomial case. Using the elementary expansion
$$\frac{re^{rt}}{1-e^{rt}}=\frac1t+\frac{r}{2}+\frac{r^2 t}{12}+O(t^3),$$
and cancellation of the $1/t$ poles inside each pair, one obtains the stated
$$
\mu=\tfrac12\sum_{i<j} n_i n_j
\qquad {\text {and}} \qquad
\sigma^2=\tfrac1{12}\sum_{i<j} n_i n_j(n_i+n_j+1).
$$

Palindromicity implies vanishing odd cumulants, so $\kappa_3=0$. 
With fixed $r$ and proportions $\lambda$ bounded away from the boundary, the same
cumulant calculation as in the proof of Theorem~\ref{thm:HermiteLimit} gives
\[
\sigma^2 \asymp n^{3} \qquad\text{and}\qquad
\frac{\kappa_{4}}{\sigma^{4}} = O_{r,\lambda}(n^{-1}).
\] 
The computation is identical to the binomial case once the multinomial is written as a sum over pairs $i<j$ of the same log-factors.

Applying the same local limit input as in the binomial case  gives a uniform \emph{quadratic log-ratio} on the window $|m-\mu|\le C\sigma$ with $O(n^{-1/2})$ error. The Hermite generating-function assembly is identical to Lemma~\ref{lem:hermite-assembly}, yielding the coefficientwise limit with rate $O_{d,r,\lambda}(n^{-1/2}). \qquad \square$

\subsection{Sketch of the proof of Corollary~\ref{cor:multinom-turan}}

Real-rootedness follows from Theorem~\ref{thm:multinom-hermite} via Lemma~\ref{lem:hurwitz} (simple zeros of the Hermite limit plus convergence). The degree $d$ Tur\'an inequalities on the window follow from Lemma~\ref{lem:jensen-turan} applied to $u_k=p(k)$, after noting that $\mathcal L$ is homogeneous and the hypotheses are satisfied uniformly in $m\in\mathcal W. \qquad $

\end{document}